\tikzset{join/.code=\tikzset{after node path={%
\ifx\tikzchainprevious\pgfutil@empty\else(\tikzchainprevious)%
edge[every join]#1(\tikzchaincurrent)\fi}}}
\tikzset{>=stealth',every on chain/.append style={join},
         every join/.style={->}}
\tikzstyle{labeled}=[execute at begin node=$\scriptstyle,
\numberwithin{equation}{section}
\numberwithin{equation}{section}
\newtheorem{thm}{THEOREM}[section]
\newtheorem{cor}[thm]{COROLLARY}
\newtheorem{lemma}[thm]{LEMMA}
\begin{document}

\title{Hochschild and Cyclic homology of the quantum Kummer Spaces}
\author{Safdar Quddus}

\date{\today}
 
\let\thefootnote\relax\footnote{2010 Mathematics Subject Classification. 58B34; 18G60}
\keywords{Homology, non-commutative N-torus, Hochschild}

\begin{abstract}
We study the quotient space obtained by the flip action on the quantum n-tori. The Hochschild, cyclic and periodic cyclic homology are calculated.
\end{abstract}
\maketitle
\section{Introduction}

Spanier \cite{S} studied the Kummer (non-smooth)manifolds obtained by the action of $\mathbb Z_2$ on the $2n$ dimensional torus. He concluded that the space is homeomorphic to $\mathbb R \mathbb P^{2n-1}$. It has $2^{2n}$ double points and is simply connected with vanishing odd homology. Alternatively, a link of fixed-points after the quotient is homeomorphic to $\mathbb R \mathbb P^{2n-1}$. The non-commutative geometry currently does not have a well defined notion of ``non-commutative knots/links" but we shall see that homologically the $\mathbb Z_2$ quotient of the $n$-quantum torus $\mathscr A_\Theta$ is similar to the Kummer manifold/variety. The dimension of cyclic homology is the same as the Betti numbers for the classical Kummer manifolds.\\

The Hochschild homology for these orbifolds for the case $n=2$ was done in \cite{O} \cite{B} and \cite{Q}. We here are inspired by the proof of \cite{Q} and extending the methodology there into higher dimensions. It maybe noted that the periodic cyclic homology of $\mathcal A_\Theta \rtimes \mathbb Z_2$, the noncommutative smooth torus $\mathbb Z_2$ computed in a recent work \cite{CTY} matches in dimension to what we have calculated in this article for the quantum/algebraic noncommutative torus with $\mathbb Z_2$ action. It is also expected that for ``sufficiently" good $\Theta$, the $\mathbb Z_2$ quotient of the non-commutative smooth $n$-torus $\mathcal A_\Theta$ shares similar Hochschild homological property but even for $n=2$, such a computation was tricky\cite{C}. What is known rather is that the odd periodic homology vanishes which does hint the vanishing of odd Hochschild homology. Other than having a striking similarity with the smooth quotients, the Hochschild homology of the quantum tori themselves have been studied \cite{W1} and \cite{T}. Readers can refer to \cite[Page 353]{BRT} for the comparison table therein for various homological and algebraic properties between the smooth non-commutative 2-torus and the quantum/algebraic non-commutative 2-torus.\\

\section{Statement}
\begin{thm}
Let $\Theta$ be a skew symmetric $n \times n$ matrix such that its entries are unimodular but none are roots of unity.\\
a) $H_0(\mathscr A_\Theta \rtimes \mathbb Z_2) \cong \mathbb C^{2^{n} +1} \text{and}$\\
b) $ H_\bullet(\mathscr A_\Theta \rtimes \mathbb Z_2) \cong \mathbb C^{\binom{n}{\bullet}} \text{ for } \bullet = 2k \text{ for some } k>0 \text{ and } 0 \text{ otherwise }.$
\end{thm}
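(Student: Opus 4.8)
The plan is to reduce the computation to the Hochschild homology of $\mathscr A_\Theta$ with coefficients in two bimodules, using the standard decomposition for a crossed product by a finite group. Since $\mathbb C[\mathbb Z_2]$ is semisimple over $\mathbb C$, averaging over the group yields a splitting
$$ H_\bullet(\mathscr A_\Theta \rtimes \mathbb Z_2) \;\cong\; \Big( HH_\bullet(\mathscr A_\Theta) \;\oplus\; HH_\bullet(\mathscr A_\Theta, {}_\sigma\mathscr A_\Theta)\Big)^{\mathbb Z_2}, $$
where $\sigma$ denotes the flip $U_i \mapsto U_i^{-1}$, the bimodule ${}_\sigma\mathscr A_\Theta$ is $\mathscr A_\Theta$ with left multiplication twisted by $\sigma$, and the superscript denotes invariants under the induced (conjugation) $\mathbb Z_2$-action. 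First I would establish this splitting, then treat the untwisted sector ($g=e$) and the twisted sector ($g=\sigma$) separately.

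For the untwisted sector I would use the Koszul bimodule resolution of $\mathscr A_\Theta$, whose $k$-th term is $\mathscr A_\Theta^e \otimes \Lambda^k\mathbb C^n \cong (\mathscr A_\Theta^e)^{\binom{n}{k}}$. Because no entry of $\Theta$ is a root of unity, the induced differentials on $\mathscr A_\Theta \otimes_{\mathscr A_\Theta^e}(\text{Koszul})$ vanish on the degree-$0$ generators and the homology is the exterior algebra, $HH_k(\mathscr A_\Theta)\cong\Lambda^k\mathbb C^n\cong\mathbb C^{\binom{n}{k}}$. The flip $\sigma$ acts on each exterior generator (dual to $U_i$) by $-1$, hence on $\Lambda^k\mathbb C^n$ by $(-1)^k$; taking invariants keeps precisely the even exterior powers. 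This already produces $\binom{n}{0}=1$ in degree $0$, $\binom{n}{2k}$ in each positive even degree $2k$, and nothing in odd degrees.

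The heart of the argument is the twisted sector $HH_\bullet(\mathscr A_\Theta, {}_\sigma\mathscr A_\Theta)$. Here the same Koszul complex is used, but the twist replaces each direction-$i$ differential by the operator $T_i(a)=U_i^{-1}a - aU_i$ (up to the unimodular reordering scalars of $\Theta$). On the monomial basis $U^\alpha$ one checks that $T_i(U^\alpha)$ is a nonzero combination of $U^{\alpha-e_i}$ and $U^{\alpha+e_i}$, so $T_i$ behaves like a second-difference operator along the $i$-th exponent. The genericity of $\Theta$ is exactly what forces these operators to have trivial kernel on the finitely supported Laurent coefficients, so that the twisted complex is exact above degree $0$ and all higher twisted homology vanishes. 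In degree $0$ the homology is the cokernel, which I expect to identify with the quotient of $\mathscr A_\Theta$ by the relations $U_i^2=1$ --- the algebra of functions on the $2^n$ flip-fixed points $U_i=\pm1$ --- giving $HH_0(\mathscr A_\Theta,{}_\sigma\mathscr A_\Theta)\cong\mathbb C^{2^n}$. Since $\sigma$ fixes each of these $2^n$ points (on them $U_i^{-1}=U_i$), the induced $\mathbb Z_2$-action on this cokernel is trivial and all $2^n$ classes survive to the invariants.

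Assembling the two sectors gives $H_0 = \binom{n}{0} + 2^n = 2^n+1$, then $H_{2k}=\binom{n}{2k}$ for $k>0$, and $H_\bullet=0$ for $\bullet$ odd, as claimed. I expect the main obstacle to be the twisted sector: proving that the operators $T_i$ are jointly injective in positive homological degree, and computing the degree-$0$ cokernel exactly, requires careful bookkeeping of the reordering scalars $\lambda_{ij}$ together with a clean use of the no-roots-of-unity hypothesis. Getting the sign of the flip action on the Koszul generators right is equally essential, since it is precisely what eliminates the odd-degree untwisted classes.
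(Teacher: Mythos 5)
Your overall route is the same as the paper's: the Getzler--Jones decomposition of $H_\bullet(\mathscr A_\Theta\rtimes\mathbb Z_2)$ into the $\mathbb Z_2$-invariants of an untwisted and a $\sigma$-twisted sector, a Koszul (Nest/Connes-type) resolution in both sectors, the sign computation showing $\sigma$ acts by $(-1)^k$ on the untwisted degree-$k$ classes, and the identification of the twisted degree-$0$ homology with the $2^n$ classes indexed by exponents mod $2$ (the fixed points $U_i=\pm1$), on which $\sigma$ acts trivially. Where you genuinely diverge is the key vanishing statement $HH_k(\mathscr A_\Theta,{}_\sigma\mathscr A_\Theta)=0$ for $k>0$: the paper proves this by induction on the dimension of the torus, splitting $E^n_s\cong E^{n-1}_s\oplus E^{n-1}_{s-1}\wedge e_n$ for $s>1$ and handling $s=1$ by a separate lattice-diagram argument; you propose instead a direct argument from injectivity properties of the twisted difference operators $T_i$. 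If completed, your route is arguably cleaner, but as written it has a logical jump: injectivity of each $T_i$ separately does \emph{not} imply acyclicity of the Koszul complex in positive degrees (two equal injective operators already give a nontrivial $H_1$). What you actually need is the regular-sequence condition --- $T_k$ injective on the cokernel of $(T_1,\dots,T_{k-1})$ --- which does hold here by the same leading-exponent argument (the top term in the $k$-th exponent of $T_k(U^\alpha)$ has a nonzero unimodular coefficient and cannot cancel on a finitely supported element), but it must be stated and proved at that level, not just for each $T_i$ on $\mathscr A_\Theta$. You do flag ``joint injectivity'' as the main obstacle, so you have located the right difficulty.

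One smaller correction: the genericity of $\Theta$ (no entry a root of unity) is not what makes the twisted operators $T_i$ injective --- that is automatic from finite support and the fact that the reordering scalars are nonzero. Genericity is needed in the \emph{untwisted} sector, where the differential multiplies $U^\alpha$ by scalars of the form $1-\lambda^{\langle\cdot,\cdot\rangle}$ and you must know these do not vanish except on the lattice point $\alpha=0$; that is what collapses $HH_k(\mathscr A_\Theta)$ to $\Lambda^k\mathbb C^n$. With that reassignment of hypotheses and the regular-sequence step supplied, your proposal assembles to the stated dimensions $2^n+1$, $\binom{n}{2k}$, and $0$ exactly as in the paper.
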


\begin{cor}
 $dim_{\mathbb C}(HC_{\bullet}(\mathscr A_\Theta \rtimes \mathbb Z_2)) = \displaystyle\sum_{2k \leq \bullet} \binom{n}{2k} + {2^{n}}$ for $\bullet$ even, $0$ otherwise.

\end{cor}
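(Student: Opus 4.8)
The plan is to feed the Hochschild computation of Theorem 1.1 into Connes' periodicity (SBI) long exact sequence; the vanishing of the odd Hochschild homology recorded there does essentially all of the work. Write $A = \mathscr A_\Theta \rtimes \mathbb Z_2$ and abbreviate the Hochschild homology $H_\bullet(A)$ of Theorem 1.1 by $HH_\bullet$. Recall the exact sequence
$$\cdots \to HC_{n-1} \xrightarrow{B} HH_n \xrightarrow{I} HC_n \xrightarrow{S} HC_{n-2} \xrightarrow{B} HH_{n-1} \to \cdots,$$
together with the normalizations $HC_0 = HH_0$ and $HC_{-1} = 0$.

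First I would dispose of the odd degrees. Since part (b) of Theorem 1.1 gives $HH_{2k+1} = 0$ for all $k$, an induction along the SBI sequence shows $HC_{2k+1} = 0$: the segment $HH_{2k+1} \xrightarrow{I} HC_{2k+1} \xrightarrow{S} HC_{2k-1}$ has both outer terms zero (the left by Theorem 1.1, the right by the inductive hypothesis, with base case $HC_{-1} = 0$), so the middle vanishes as well.

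With every odd cyclic group gone, the even part of the sequence collapses. For each $m \geq 1$ the two odd-degree terms flanking
$$0 \to HH_{2m} \xrightarrow{I} HC_{2m} \xrightarrow{S} HC_{2m-2} \to 0$$
are zero, so this sequence is short exact; as a sequence of $\mathbb C$-vector spaces it splits, whence $\dim HC_{2m} = \dim HH_{2m} + \dim HC_{2m-2}$. Telescoping down to the base case $HC_0 = HH_0$ gives $\dim HC_{2m} = \sum_{k=0}^{m} \dim HH_{2k}$.

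It then remains to substitute the values of Theorem 1.1, and the only point requiring care is the bookkeeping at degree $0$. Since $\dim HH_0 = 2^n + 1$ whereas $\dim HH_{2k} = \binom{n}{2k}$ for $k \geq 1$, rewriting the stray constant as $1 = \binom{n}{0}$ recombines the sum,
$$\dim HC_{2m} = (2^n + 1) + \sum_{k=1}^{m} \binom{n}{2k} = 2^n + \sum_{k=0}^{m} \binom{n}{2k} = 2^n + \sum_{2k \leq 2m} \binom{n}{2k},$$
which is exactly the asserted formula, the odd case being covered by $HC_{\mathrm{odd}} = 0$ above. I do not expect any genuine obstacle here: the whole content is carried by the odd-vanishing in Theorem 1.1, which is precisely what forces the long exact sequence to degenerate into splitting short exact sequences.
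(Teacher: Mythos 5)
Your proposal is correct and uses essentially the same mechanism as the paper: the SBI long exact sequence degenerates because the odd Hochschild homology from Theorem 1.1 vanishes, forcing $HC_{\mathrm{odd}}=0$ and $\dim HC_{2m}=\sum_{k\le m}\dim HH_{2k}$. The only (cosmetic) difference is that you apply the SBI sequence once to the crossed product $\mathscr A_\Theta \rtimes \mathbb Z_2$ itself, whereas the paper runs it on the $\mathbb Z_2$-invariant summands of the paracyclic decomposition and then adds; your version is, if anything, the more direct bookkeeping.
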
 

\begin{cor}
The periodic homology are as follows:\\
a) $HP_{even}(\mathscr A_\Theta \rtimes \mathbb Z_2) \cong \mathbb C^{3 \cdot 2^{n-1}} \text{and}$\\
b) $ HP_{odd}(\mathscr A_\Theta \rtimes \mathbb Z_2) = 0.$
\end{cor}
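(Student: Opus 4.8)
The plan is to obtain $HP$ from the cyclic homology of Corollary 1.2 by stabilising Connes' periodicity operator $S$ and passing to the inverse limit. Periodic cyclic homology sits in the Milnor sequence
$$0 \to {\varprojlim_k}^{1} HC_{\bullet+2k+1}(\mathscr A_\Theta \rtimes \mathbb Z_2) \to HP_\bullet(\mathscr A_\Theta \rtimes \mathbb Z_2) \to \varprojlim_k HC_{\bullet+2k}(\mathscr A_\Theta \rtimes \mathbb Z_2) \to 0,$$
the inverse systems being formed along $S\colon HC_m \to HC_{m-2}$. Thus it suffices to analyse the two towers $\{HC_{2k},S\}$ and $\{HC_{2k+1},S\}$ and their derived limits.

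The key point is that $S$ becomes an isomorphism in high degrees. I will feed the Hochschild groups of Theorem 1.1 into Connes' exact sequence
$$\cdots \to H_m \xrightarrow{\;I\;} HC_m \xrightarrow{\;S\;} HC_{m-2} \xrightarrow{\;B\;} H_{m-1} \xrightarrow{\;I\;} HC_{m-1} \to \cdots,$$
reading off injectivity of $S\colon HC_m \to HC_{m-2}$ from the vanishing of $H_m$ (so that $\mathrm{im}\,I=\ker S=0$) and surjectivity from the vanishing of $H_{m-1}$ (so that $\ker B = HC_{m-2}$). Since Theorem 1.1 gives $H_m = 0$ for every $m>n$ as well as for every odd $m$, the operator $S\colon HC_{2m}\to HC_{2m-2}$ is an isomorphism as soon as $2m>n$; in particular the even tower is eventually constant. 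This is numerically consistent with Corollary 1.2, where $\dim HC_{2m} - \dim HC_{2m-2} = \binom{n}{2m}$ vanishes exactly when $2m>n$, and at that stage $\sum_{k}\binom{n}{2k}=2^{n-1}$, so $\dim HC_{2m} = 2^{n-1}+2^{n} = 3\cdot 2^{n-1}$.

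From here both conclusions should drop out. For the even part, the tower $\{HC_{2k},S\}$ is pro-isomorphic to the constant system $\mathbb C^{3\cdot 2^{n-1}}$; it therefore satisfies the Mittag--Leffler condition, the ${\varprojlim}^{1}$ term (built from the vanishing odd groups) is zero, and $HP_{even} \cong \varprojlim_k HC_{2k} \cong \mathbb C^{3\cdot 2^{n-1}}$. For the odd part, Corollary 1.2 gives $HC_{2k+1}=0$ for all $k$, so $\varprojlim_k HC_{2k+1}=0$; the accompanying ${\varprojlim}^{1}$ term is that of the even tower, which again vanishes by Mittag--Leffler, whence $HP_{odd}=0$.

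I do not anticipate a genuine obstacle, since all the content already resides in Theorem 1.1 and Corollary 1.2; the remaining work is the bookkeeping of Connes' sequence to locate the degree past which $S$ is invertible, together with the (automatic, because everything is finite dimensional and eventually stationary) vanishing of the derived limits. As an independent check, the Euler characteristic $\sum_m (-1)^m \dim H_m = (2^n+1) + \binom{n}{2} + \binom{n}{4} + \cdots = 2^{n} + 2^{n-1} = 3\cdot 2^{n-1}$ must equal $\dim HP_{even} - \dim HP_{odd}$, in agreement with the stated values.
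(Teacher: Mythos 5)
Your argument is correct, and it reaches the stated dimensions by a route that is organized differently from the paper's. The paper never works with $HC_\bullet(\mathscr A_\Theta\rtimes\mathbb Z_2)$ directly at this stage: it keeps the Getzler--Jones decomposition in play, applies the $\mathbb Z_2$-equivariant $S,B,I$ sequence to each summand separately to get $HP_{even}({}_{-}\mathscr A_\Theta)^{\mathbb Z_2}=\mathbb C^{2^n}$, $HP_{even}(\mathscr A_\Theta)^{\mathbb Z_2}=\mathbb C^{2^{n-1}}$ and the vanishing of both odd parts, and only then adds the two pieces. You instead feed the already-assembled Theorem 1.1 and Corollary 1.2 into Connes' sequence for the crossed product itself, observe that $S\colon HC_{2m}\to HC_{2m-2}$ is surjective for all $m$ (odd Hochschild groups vanish) and injective once $2m>n$ (even Hochschild groups vanish there), and then pass to $HP$ via the Milnor $\varprojlim$/$\varprojlim^1$ sequence, with Mittag--Leffler disposing of the derived limit. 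The underlying mechanism --- stabilization of $S$ forced by the vanishing pattern of Hochschild homology --- is the same in both treatments, but your version has two advantages: it uses only the statements already proved (so it does not require recomputing periodic homology summand by summand), and it makes explicit the inverse-limit bookkeeping that the paper leaves implicit when it simply asserts the values of $HP_{even}$ for each summand. The price is that you must invoke the Milnor sequence for the product total complex, which the paper avoids. Your Euler-characteristic cross-check, $2^n+\sum_k\binom{n}{2k}=3\cdot 2^{n-1}$, is a useful sanity check and agrees with both computations.
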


\section{Strategy of the Proof}

We shall study the Hochschild homology using the paracyclic spectral decomposition of the homology of the crossed product algebra. \cite{GJ}
$$ H_\bullet(\mathscr{A}_\Theta \rtimes \mathbb Z_2) \cong H_\bullet(\mathscr{A}_\Theta)^{\mathbb Z_2} \oplus H_\bullet({}_{-}\mathscr{A}_\Theta)^{\mathbb Z_2},$$
where ${}_{-}\mathscr{A}_\Theta$ is the algebra $\mathscr{A}_\Theta$ with (left) $\mathbb Z_2$-twisted $\mathscr{A}_\Theta^e$ module structure. 

Hence our proof will investigate each of the summands in the above decomposition by firstly understanding the associated Hochschild homology and then locating the $\mathbb Z_2$ invariant cycles. We shall use Nest's resolution(which is similar to Wambst's resolution for quantum symmetric algebras\cite{W2}) and also Connes' resolution as and when we find suitable.
\section{Nest's Koszul Resolution Revisited}

Nest \cite{N} introduced a Koszul resolution for higher dimensional non-commutative tori. We briefly describe his resolution below.\\

The algebra $\mathscr A_\Theta$ for a skew symmetric complex matrix $\Theta$ is generated by unitaries $\nu_1, \nu_2, \dots , \nu_n$, satisfying the commutation relations
$$ \nu_i \nu_j = \lambda_{ij} \nu_j \nu_i, \text{ for } 1 \leq i,j \leq n $$
such that $|\lambda_{ij}|=1$.\\
The enveloping algebra $\mathscr A_\Theta^e$ is the algebraic tensor of $\mathscr A_\Theta$ and its opposite algebra.
$$\mathscr A_\Theta^e = \mathscr A_\Theta \otimes \mathscr A_\Theta^{op}.$$
An element of $\mathscr A_\Theta^e$ is denoted by $ a \otimes b^o $ for $a \in \mathscr A_\Theta$ and $b^o \in \mathscr A_\Theta^{op}$.  
We set $V = \mathbb C^n$ with orthonormal basis $e_1, e_2, \dots, e_n$. We have the standard bar resolution of $\mathscr A_\Theta$ is given as below:

$$ \cdots \xrightarrow{b} \Lambda_s(\mathscr A_\Theta) \xrightarrow{b} \Lambda_{s-1}(\mathscr A_\Theta) \xrightarrow{b} \cdots \xrightarrow{b} \mathscr A_\Theta^e \xrightarrow{\epsilon} \mathscr A_\Theta,$$
where
$ \Lambda_s(\mathscr A_\Theta) = \mathscr A_\Theta^e \otimes \mathscr A_\Theta^{\otimes s}$, $\epsilon$ is the augmentation map and $b : \Lambda_n(\mathscr A_\Theta) \rightarrow \Lambda_{n-1}(\mathscr A_\Theta)$;
$$a_0 \otimes a_1 \otimes \cdots \otimes a_n \mapsto \displaystyle\sum_{i=0}^{n-1}(-1)^i a_0 \otimes \cdots a_i a_{i+1} \cdots a_n + (-1)^n a_n a_0 \otimes a_1 \otimes \cdots a_{n-1}.$$
We set $E_s = \mathscr A_\Theta^e \otimes \Lambda^s V$ and consider the following maps:

$$h_s : E_s \rightarrow \Lambda_s(\mathscr A_\Theta);$$
$$ 1 \otimes e_{i_1} \wedge e_{i_2} \wedge \cdots \wedge e_{i_n} \mapsto \displaystyle\sum_{ \sigma \in S_s} sgn(\sigma) (\nu_{\sigma(i_1)}\nu_{\sigma(i_2)} \cdots \nu_{\sigma(i_s)})^{-1} \otimes \nu_{\sigma(i_1)} \otimes \nu_{\sigma(i_2)} \otimes \cdots \otimes \nu_{\sigma(i_s)}.$$

$$\alpha_s : E_s \rightarrow E_{s-1};$$
$$ 1 \otimes e_{i_1} \wedge e_{i_2} \wedge \cdots \wedge e_{i_n} \mapsto \displaystyle\sum_{k=1}^{s} (-1)^k (1 - \nu_{i_k}^{-1} \otimes \nu_{i_k}^{o}) \otimes e_{i_1} \wedge e_{i_2} \wedge \cdots \wedge \hat{e_{i_k}} \wedge \cdots \wedge e_{i_n} .$$

$$ k_s : \Lambda_s(\mathscr A_\Theta) \rightarrow E_s;$$
$$ k((\nu_{\pi_1}\nu_{\pi_2} \cdots \nu_{\pi_s})^{-1} \otimes \nu_{\pi_1} \otimes \nu_{\pi_2} \otimes \cdots \otimes \nu_{\pi_s}) = $$$$ \displaystyle\sum_{i_1 > i_2> \dots i_s} \rho_{i_1}((\nu_{\pi_1})^{-1} \otimes (\nu_{\pi_1}))  \wedge \rho_{i_2}((\nu_{\pi_2})^{-1} \otimes (\nu_{\pi_2})) \wedge \dots \wedge \rho_{i_s}((\nu_{\pi_s})^{-1} \otimes (\nu_{\pi_s})).  $$

Where for $E_s \cong \mathscr A_\Theta \otimes \Lambda^s V \otimes \mathscr A_\Theta$ has a graded product structure\cite{N}, for $\pi=(\pi_1, \pi_2, \dots \pi_s) \in \mathbb Z^s$, $\nu^{\pi} := \nu_1^{\pi_1} \nu_2^{\pi_2} \dots \nu_s^{\pi_s}$ and $\rho_i : \Lambda_1(\mathscr A_\Theta) \rightarrow E_1$ as defined as below

Note: The formula for $\rho_i((\nu^{\pi_1})^{-1} \otimes \nu^{\pi_1})$ in \cite{N}[Page 1050] has a misprint and the correct formula, which we shall use in our study is as follows:
$$\rho_i((\nu^{\pi})^{-1} \otimes \nu^{\pi})= (\nu^\pi|_{>i})^{-1} (\displaystyle \sideset{}{'} \sum\limits_{s=0}^{\pi_i -1} \nu_i^{-k} \otimes e_i \otimes \nu_i^k )(\nu^\pi|_{>i}).$$
where 

$\displaystyle \sideset{}{'} \sum\limits_{i=0}^{n}= \begin{cases}
\displaystyle \sum\limits_{i=0}^n & \text{ for } n \geq 0,\\
0  & \text{ for } n = -1, \\
-\displaystyle \sum\limits_{n+1}^{-1} & \text{ for } n < -1. \end{cases}$\\
and $\nu^\pi|_{> {p-1}} := \nu_{p}^{\pi_p} \dots \nu_{n}^{\pi_n}$.

Though Nest gave this resolution for smooth non-commutative $n$-tori $\mathcal A_\Theta$, but it is also a resolution of the quantum tori $\mathscr A_\Theta$, the proof is easy and similar to the proof that Connes' resolution is a resolution for quantum 2-torus.\cite{Q} 
\section{Invariant cycles, $H_\bullet(\mathscr A_\Theta)^{\mathbb Z_2}$} \par

Using the Nest resolution we can easily compute $H_0(\mathscr A_\Theta)$ explicitly, they zeroth cocycles are the $\mathscr{A}_\Theta / im(\alpha_1)$, where
$$ \alpha_1(a^i \otimes 1 \otimes e_i) = a^i\otimes (1- \nu_i^{-1} \otimes {\nu_i}^{o})=a^i - \nu_i^{-1} a^i \nu_i.$$
Clearly, the zeroth Hochschild homology $H_0(\mathscr{A}_\Theta)$ is generated by the equivalence class of elements supported at $a_{\bar{0}}$. These scalars are invariant under $\nu_i \mapsto \nu_i^{-1}$, hence $H_0(\mathscr A_\Theta)^{\mathbb Z_2} = \mathbb C$.

To compute $H_\bullet(\mathscr{A}_\Theta)^{ \mathbb Z_2}$ for $\bullet > 1$ we shall firstly observe $H_\bullet(\mathscr{A}_\Theta)$ using the Nest's Koszul resolution and then locate the invariant cycles. Wambst \cite{W1} computed these $k$-Hochschild cycles of the quantum tori $\mathscr A_\Theta$, they were generated by elements $\{ (x^\pi)^{-1} \otimes x^\pi\}_{\pi \in  \{0,1\}^{n}}$ with $|\pi| = k$. But in this article, we shall restrict ourselves with the notation of Nest \cite{N}.\\

Let us consider $E_s$ in the Nests' Koszul resolution, using the following map it is straightforward to see that $H_s(\mathscr A_\Theta)$ is generated by elements $a_{\bar{0}} \otimes e_{i_1} \wedge \cdots \wedge e_{i_s}$ where $\{i_1, \dots , i_s\} \subset \{1,2, \dots, n\}$. We want to locate the $\mathbb Z_2$ Hochschild $k$-cycle using the Koszul resolution of $\mathscr A_\Theta$.
$$ (1 \otimes d_s) : \mathscr A_\Theta \otimes E_s \rightarrow \mathscr A_\Theta \otimes E_{s-1}.$$

Let $ a_{\bar{0}} \otimes 1 \otimes e_{i_1}\wedge \cdots \wedge e_{i_s} \in ker(1\otimes d_s)$, to check if it is invariant under $\mathbb Z_2$ action we push the cycle into the bar complex using the map $h_s$.

\begin{center}
\begin{tikzcd}
   \cdots \arrow{r} &  E_2\arrow{r}{\alpha_2}\arrow[harpoon]{d}{k_2} & {E}_1\arrow{r}{\alpha_1}\arrow[harpoon]{d}{k_1}  & E_0\arrow{r}{\alpha_0}\arrow[harpoon]{d}{k_0 = id} & \mathscr A_\Theta\arrow{r}\arrow{d}{\cong} & 0 \\
    \cdots \arrow{r} & \mathscr A_\Theta^{\otimes 4}\arrow{r}{b'}\arrow[harpoon]{u}{h_2}\arrow{r} & \mathscr A_\Theta^{\otimes 3}\arrow[harpoon]{u}{h_1}\arrow{r}{b'} & \mathscr A_\Theta^{\otimes 2}\arrow{r}{b'}\arrow[harpoon]{u}{h_0=id}\arrow{r} & \mathscr A_\Theta\arrow{r} & 0
\end{tikzcd}
\end{center}

$$(1 \otimes h_s)(a_{\bar{0}}\otimes 1 \otimes e_{i_1} \wedge \cdots \wedge e_{i_s}) = a_{\bar{0}} \otimes \displaystyle \sum\limits_{\sigma  \in S_s} sgn(\sigma) (\nu_{\sigma(i_1)} \dots \nu_{\sigma(i_s)})^{-1} \otimes \nu_{\sigma(i_1)} \otimes \cdots \otimes \nu_{\sigma(i_s)}.$$ 
Now,
$$ (1 \otimes k_s)(a_{\bar{0}} \otimes {\displaystyle \sum\limits_{\sigma  \in S_s} sgn(\sigma) (\nu_{\sigma(i_1)}^{-1} \dots \nu_{\sigma(i_s)}^{-1})^{-1} \otimes \nu_{\sigma(i_1)}^{-1} \otimes \cdots \otimes \nu_{\sigma(i_s)}^{-1}})$$ $$= sgn(\psi) a_{\bar{0}} \otimes  k_s((\nu_{\psi(i_1)}^{-1} \dots \nu_{\psi(i_s)}^{-1})^{-1} \otimes \nu_{\psi(i_1)}^{-1} \otimes \cdots \otimes \nu_{\psi(i_s)}^{-1})$$

Where $\psi \in S_k$ is the permutation such that $\psi(i_1) > \psi(i_1) > \cdots \psi(i_s)$. We have used the fact that $\rho_i((\nu^\pi)^{-1} \otimes \nu^\pi) = 0$ if $(\pi)_i =0$.

$$\rho_{\psi_j}(\nu_{\psi(i_j)} \otimes \nu_{\psi(i_j)}^{-1}) = -(\nu_{\psi_j} \otimes e_{\psi_j} \otimes \nu_{\psi_j}^{-1}).$$
Therefore,
 $$ (1 \otimes k_s)(a_{\bar{0}} \otimes {\displaystyle \sum\limits_{\sigma  \in S_s} sgn(\sigma) (\nu_{\sigma(i_1)}^{-1} \dots \nu_{\sigma(i_s)}^{-1})^{-1} \otimes \nu_{\sigma(i_1)}^{-1} \otimes \cdots \otimes \nu_{\sigma(i_s)}^{-1}})$$ $$= (-1)^{s} sgn(\psi) sgn(\psi)^{-1} a_{\bar{0}} \otimes 1 \otimes e_{i_1} \wedge \cdots \wedge e_{i_s}.$$

Hence for $s$ even all the $s$-cycles are $\mathbb Z_2$ invariant and for $s$ odd none are. This was exactly the case in \cite[Page 329, 331]{Q}, for the 1-cycles and the 2-cycle of the quantum 2-torus with $SL_2(\mathbb Z)$ action. We have the following:
\begin{lemma}
$H_\bullet(\mathscr A_\Theta)^{\mathbb Z_2} = \mathbb C^{\binom{n}{\bullet}}$ if $\bullet =2k$, 0 otherwise. 
\end{lemma}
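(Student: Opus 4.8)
The plan is to read off $H_\bullet(\mathscr A_\Theta)$ from Nest's Koszul complex $(E_\bullet,\alpha_\bullet)$ and then compute the eigenvalue of the flip $\nu_i\mapsto\nu_i^{-1}$ on each homology class. First I would confirm the underlying homology: applying $\alpha_s$ to a generator $a\otimes 1\otimes e_{i_1}\wedge\cdots\wedge e_{i_s}$ multiplies it by factors $(1-\nu_{i_k}^{-1}\otimes\nu_{i_k}^{o})$, and since the $\lambda_{ij}$ are unimodular with none a root of unity, these factors annihilate every monomial except the one supported at the origin $a_{\bar 0}$. This leaves exactly the classes $a_{\bar 0}\otimes e_{i_1}\wedge\cdots\wedge e_{i_s}$ indexed by the $s$-subsets of $\{1,\dots,n\}$, so that $H_s(\mathscr A_\Theta)\cong\mathbb C^{\binom n s}$, in agreement with Wambst's count.

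The substance of the lemma is the action of $\mathbb Z_2$ on these classes. Since the flip is defined on the bar resolution rather than on $E_\bullet$, I would transport each Koszul generator across via the comparison map $h_s$, apply the flip $\nu_i\mapsto\nu_i^{-1}$ in the bar complex, and pull the result back with $k_s$; the composite $k_s\circ(\text{flip})\circ h_s$ computes the induced action on $H_s$. Pushing forward produces a signed sum over $S_s$ of terms $(\nu_{\sigma(i_1)}\cdots\nu_{\sigma(i_s)})^{-1}\otimes\nu_{\sigma(i_1)}\otimes\cdots$; the flip turns each monomial into its inverse, and $k_s$ then re-sorts the indices into decreasing order, introducing a single permutation $\psi$ whose sign cancels against the $\mathrm{sgn}(\sigma)$ already carried along.

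The crux is the sign arithmetic inside $\rho_i$. The decisive identity is that, after the flip, each single-index factor satisfies $\rho_i(\nu_i\otimes\nu_i^{-1})=-(\nu_i\otimes e_i\otimes\nu_i^{-1})$: the minus sign issues precisely from the $n<-1$ branch of Nest's primed summation convention when the exponent $\pi_i$ drops to $-1$. As a degree-$s$ class carries $s$ such factors, together they contribute $(-1)^s$ while all remaining signs cancel, so the flip acts as the scalar $(-1)^s$ on $H_s(\mathscr A_\Theta)$. I expect the fussiest point to be checking that the conjugating factors $\nu^\pi|_{>i}$ and the graded product on $E_s$ reassemble $e_{i_1}\wedge\cdots\wedge e_{i_s}$ cleanly, contributing no stray signs.

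Taking $\mathbb Z_2$-invariants then finishes the argument: the $+1$-eigenspace of multiplication by $(-1)^s$ is all of $H_s(\mathscr A_\Theta)$ for $s$ even and is $\{0\}$ for $s$ odd. Therefore $H_\bullet(\mathscr A_\Theta)^{\mathbb Z_2}\cong\mathbb C^{\binom n\bullet}$ when $\bullet$ is even and vanishes otherwise, which is the assertion of the lemma and recovers the $n=2$ pattern recorded in \cite{Q}.
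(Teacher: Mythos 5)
Your proposal follows essentially the same route as the paper: identify the generators $a_{\bar 0}\otimes e_{i_1}\wedge\cdots\wedge e_{i_s}$ of $H_s(\mathscr A_\Theta)$ from Nest's complex, transport them to the bar complex via $h_s$, apply the flip, pull back with $k_s$, and extract the sign $(-1)^s$ from the $\pi_i=-1$ branch of the primed sum in $\rho_i$, with the permutation signs cancelling. This matches the paper's argument step for step, including the key identity $\rho_i(\nu_i\otimes e_i\otimes\nu_i^{-1})$ carrying the minus sign.
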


\section{Twisted invariant cycles, $H_\bullet({}_{-}\mathscr A_\Theta)^{\mathbb Z_2}$}
To calculate $H_\bullet({}_{-}\mathscr A_\Theta)$ we need to consider the $\mathbb Z_2$ twisted Koszul chain complex. For $s=0$ we can explicitly see that the $\mathbb Z_2$ twisted zeroth cycle.
$$H_0({}_{-}\mathscr A_\Theta) = {}_{-}\mathscr A_\Theta / {}_{-}\alpha_1.$$ where
$$ {}_{-}\alpha_1(a^i \otimes 1 \otimes e_i) = a^i\otimes(1- \nu_i^{-1} \otimes {\nu_i}^{o})=a^i - \nu_i a^i \nu_i.$$
Hence $H_0({}_{-}\mathscr A_\Theta)$ is generated by the equivalence class of elements of the form $(a_\beta)_{\beta \in \{0,1\}^{n}}$. Therefore $H_0({}_{-}\mathscr A_\Theta) = \mathbb C^{2^n}$. Under the action of $\mathbb Z_2$, an element $(a_\beta)$ is mapped to homologous element $(a_{-\beta})$ hence $$H_0({}_{-}\mathscr A_\Theta)^{\mathbb Z_2} = \mathbb C^{n}.$$
Observe that for $a \in H_{n}({}_{-}\mathscr A_\Theta) = ker (1 \otimes d_{n})$, $ a = \nu_i a \nu_i$ for all $i$. Hence $H_{n}({}_{-}\mathscr A_\Theta) = 0$.\\
We shall proceed by induction, we shall induct on the dimension of the torus.\\

We state that for the $n$- dimensional quantum torus($n >1$), $H_\bullet({}{-}\mathscr A_\Theta)=0$ for all $0 < \bullet <n$. As we noted earlier, the above statement holds for the case $n=2$. Let us assume that for all torus of dimensions less than $n$ it holds. We shall prove that $H_\bullet({}{-}\mathscr A_\Theta)=0$ for all $0 < \bullet <n$. The proof is now divided into two cases: 
\subsection{Case I: $\bullet > 1$}
\begin{lemma}
In this case we shall prove that if $H_\bullet({}_{-}\mathscr A_\Theta)=0$ for $n-1>\bullet >0$ then $H_\bullet({}_{-}\mathscr A_\Theta^n) = 0$ for all $n>\bullet>1$. \\
\end{lemma}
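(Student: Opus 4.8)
The plan is to cut the dimension down by one, realising the $n$-torus as a skew-Laurent extension of the $(n-1)$-torus and passing the inductive hypothesis through a mapping-cone long exact sequence. Write $\mathscr{A}_{\Theta'}$ for the quantum $(n-1)$-torus generated by $\nu_1,\dots,\nu_{n-1}$, so that $\mathscr{A}_\Theta = \bigoplus_{m\in\mathbb{Z}}\mathscr{A}_{\Theta'}\nu_n^m$, with $\nu_n$ acting by conjugation. In the twisted Koszul complex $C^{(n)}_\bullet = {}_{-}\mathscr{A}_\Theta\otimes\Lambda^\bullet V$ I would split the last basis vector $e_n$ off of $\Lambda^\bullet V$. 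Writing $C'_\bullet = {}_{-}\mathscr{A}_\Theta\otimes\Lambda^\bullet\langle e_1,\dots,e_{n-1}\rangle$ for the partial complex built from only the first $n-1$ twisted differentials, the splitting $\Lambda^s V = \Lambda^s\langle e_1,\dots,e_{n-1}\rangle\oplus(\Lambda^{s-1}\langle e_1,\dots,e_{n-1}\rangle\wedge e_n)$ exhibits $C^{(n)}$ as the mapping cone of the degree-zero operator $\theta_n = 1-\mathrm{Ad}^{+}_{\nu_n}$, where $\mathrm{Ad}^{+}_{\nu_n}(a)=\nu_n a\nu_n$. This yields the long exact sequence
$$\cdots\to H_\bullet(C')\xrightarrow{\theta_n}H_\bullet(C')\to H_\bullet({}_{-}\mathscr{A}_\Theta)\to H_{\bullet-1}(C')\xrightarrow{\theta_n}H_{\bullet-1}(C')\to\cdots,$$
so the whole problem reduces to controlling $H_\bullet(C')$ in the range $1\le\bullet\le n-1$.

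Next I would compute $H_\bullet(C')$ using the $\nu_n$-grading. The first $n-1$ twisted operators carry $\mathscr{A}_{\Theta'}\nu_n^m$ into itself, acting on the $m$-th summand as $1-\lambda_{ni}^m\,\mathrm{Ad}^{+}_{\nu_i}$ (since $\nu_i\,b\nu_n^m\,\nu_i=\lambda_{ni}^m(\nu_i b\nu_i)\nu_n^m$), so $C'$ decomposes as $\bigoplus_m C'_{(m)}$. The summand $C'_{(0)}$ is exactly the twisted Koszul complex of $\mathscr{A}_{\Theta'}$, whose homology is killed in degrees $0<\bullet<n-1$ by the inductive hypothesis and in the top degree $\bullet=n-1$ by the same argument that gives $H_n({}_{-}\mathscr{A}_\Theta)=0$ (an element fixed by every $a\mapsto\nu_i a\nu_i$ must vanish). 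For the remaining summands the decisive observation is that the operators $\mathrm{Ad}^{+}_{\nu_i}$ \emph{mutually commute}: using $\lambda_{ij}\lambda_{ji}=1$ one checks $(\nu_i\nu_j)a(\nu_j\nu_i)=(\nu_j\nu_i)a(\nu_i\nu_j)$. Hence each $C'_{(m)}$ is an honest (sign-commutative) Koszul complex of the commuting family $\{1-\lambda_{ni}^m\mathrm{Ad}^{+}_{\nu_i}\}$; restricting to a fixed parity class $\beta'\in\{0,1\}^{n-1}$ of the $\mathbb{Z}^{n-1}$-grading, each operator is an invertible scalar times the shift by $2e_i$, so that parity class is the regular representation $\mathbb{C}[\mathbb{Z}^{n-1}]$ and the associated Koszul complex resolves the augmentation module. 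This gives $H_\bullet(C'_{(m)})=0$ for $\bullet\ge 1$ and $\cong\mathbb{C}^{2^{n-1}}$ for $\bullet=0$, for every $m$, and collecting summands yields $H_\bullet(C')=0$ for all $\bullet\ge1$.

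Feeding this back into the long exact sequence finishes the argument: for $2\le\bullet\le n-1$ both $H_\bullet(C')$ and $H_{\bullet-1}(C')$ lie in the vanishing range, so the sequence forces $H_\bullet({}_{-}\mathscr{A}_\Theta)=0$, which is precisely the assertion for $n>\bullet>1$. The degrees $\bullet=0,1$ deliberately fall outside this window: there $H_0(C')=\bigoplus_{m}\mathbb{C}^{2^{n-1}}$ is the only surviving (infinite-dimensional) homology, and $\theta_n$ then cuts its coinvariants down to $\mathbb{C}^{2^n}$ and its kernel contributes in degree $1$, matching the separate $H_0$ computation and Case II.

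I expect the main obstacle to be twofold. The first, more technical, point is making the mapping-cone splitting rigorous: one must carry Nest's graded product signs and the quantum scalars through the identification $C^{(n)}\cong\mathrm{Cone}(\theta_n)$ and verify that $\theta_n$ is a chain map on $C'$ up to the permitted signs. The second, genuinely structural, point is the positive-degree acyclicity of each $C'_{(m)}$ — equivalently, that the shifted commuting operators $1-\lambda_{ni}^m\mathrm{Ad}^{+}_{\nu_i}$ form a Koszul-regular family on the regular representation. The commutativity of the $\mathrm{Ad}^{+}_{\nu_i}$ reduces this to the freeness of the $\mathbb{Z}^{n-1}$-action, so genericity of $\Theta$ enters only as a safeguard against spurious intermediate invariants rather than as an essential hypothesis here (its real force being in the untwisted Lemma). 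Once the commutativity and the cone bookkeeping are secured, the rest is routine.
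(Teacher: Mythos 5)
Your argument is correct, and in its decisive step it takes a genuinely different route from the paper's. Both proofs start from the same splitting $\Lambda^s V=\Lambda^s V'\oplus(\Lambda^{s-1}V'\wedge e_n)$, but the paper then runs an element-level chase: it takes $\gamma\in\ker(1\otimes{}_{-}\alpha_s)$, lifts $\pi_2(\gamma)$ using the inductive hypothesis, and reduces the residual kernel relation to one ``over the $(n-1)$-torus''. The delicate point there is that the partial complex ${}_{-}\mathscr A_\Theta\otimes E^{n-1}_\bullet$ has coefficients in the full $n$-dimensional algebra, so the inductive hypothesis (a statement about the $(n-1)$-torus) does not literally apply to it; moreover on the $\nu_n$-degree-$m$ summand the operators become $1-\lambda_{ni}^m\,\mathrm{Ad}^{+}_{\nu_i}$ rather than $1-\mathrm{Ad}^{+}_{\nu_i}$. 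Your route --- packaging the splitting as the mapping cone of $\theta_n=1-\mathrm{Ad}^{+}_{\nu_n}$ acting on $C'$, then computing $H_\bullet(C')$ summand by summand using the commutativity of the $\mathrm{Ad}^{+}_{\nu_i}$ and the regularity of the family $\{1-c_i\cdot(\text{shift by }2e_i)\}$ on each parity class of $\mathbb{C}[\mathbb{Z}^{n-1}]$ --- addresses exactly this issue, and in fact makes the inductive hypothesis dispensable for Case I, since your regular-sequence argument applies to the $m=0$ summand as well. It even gives more: $1-(\text{shift by }2)$ is injective on finitely supported sequences, so the same long exact sequence forces $H_1=0$ and would absorb Case II with no extra work. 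What the paper's approach buys is brevity and no cone bookkeeping; what yours buys is a complete justification of the acyclicity of the reduced complex, which is where the real content of the lemma sits. The two obstacles you flag are indeed only routine checks: the commutativity $\nu_j(\nu_i a\nu_i)\nu_j=(\nu_i\nu_j)a(\nu_j\nu_i)$ follows from $\lambda_{ij}\lambda_{ji}=1$ exactly as you indicate, and the signs in the cone identification come straight from Nest's formula for $\alpha_s$.
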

\begin{proof}

We notice that $\binom{n}{s} = \binom{n-1}{s} +\binom{n-1}{s-1}$. Hence we have the following identification.\\
$$ (\pi_1, \pi_2) : {}_{-}\mathscr A_\Theta \otimes E^n_s = {}_{-}\mathscr A_\Theta \otimes  E^{n-1}_s \oplus {}_{-}\mathscr A_\Theta \otimes  E^{n-1}_{s-1} \wedge e_n$$
Therefore to show that $\mathscr A_\Theta \otimes E^n_\bullet$ is acyclic at $s$ it is enough to show that $${}_{-}\mathscr A_\Theta \otimes E^{n}_{s+1} \bigoplus {}_{-}\mathscr A_\Theta \otimes E^{n}_{s} \wedge e_n \xrightarrow{1 \otimes {}_{-}\alpha_{s+1}} {}_{-}\mathscr A_\Theta \otimes  E^{n-1}_s \bigoplus {}_{-}\mathscr A_\Theta \otimes  E^{n-1}_{s-1} \wedge e_n \xrightarrow{1 \otimes {}_{-}\alpha_{s}} {}_{-}\mathscr A_\Theta \otimes  E^{n-1}_{s-1} \bigoplus {}_{-}\mathscr A_\Theta \otimes  E^{n-1}_{s-2} \wedge e_n $$ is middle exact.\\
We notice that the map $1 \otimes {}_{-}\alpha^{n}_{s}$ does mix the direct summands. Explicitly, $$(1 \otimes {}_{-}\alpha_{s})({}_{-}\mathscr A_\Theta \otimes  E^{n-1}_s) \subset {}_{-}\mathscr A_\Theta \otimes  E^{n-1}_{s-1};$$ $$(1 \otimes {}_{-}\alpha_{s})({}_{-}\mathscr A_\Theta^{n} \otimes  E^{n-1}_{s-1}\wedge e_n) \subset {}_{-}\mathscr A_\Theta^{n} \otimes  E^{n-1}_{s-1} \bigoplus {}_{-}\mathscr A_\Theta \otimes  E^{n-1}_{s-2} \wedge e_n.$$\\
Hence for $\gamma \in ker(1\otimes  {}_{-}\alpha_s^{n})$, $\pi_2(\gamma) \in ker (1 \otimes {}_{-}\alpha^{n-1}_{s-1} ) \subset {}_{-}\mathscr A_\Theta \otimes  E^{n-1}_{s-2} \wedge e_n$. But, since by hypothesis $H_{s-1}({}_{-}\mathscr A_\Theta )=0$, there exists $\mu_2 \in {}_{-}\mathscr A_\Theta \otimes  E^{n-1}_{s-1}$ such that $$(1 \otimes {}_{-}\alpha^{n-1}_{s})(\mu_2) = \pi_2(\gamma).$$
Therefore, $\pi_2(1 \otimes {}_{-}\alpha^n_{s+1})(\mu_2 \wedge e_n)= \pi_2(\gamma)$.

 
 We are now left to prove that there exists a $\mu_1 \in {}_{-}\mathscr A_\Theta \otimes  E^{n-1}_{s}$, such that $\pi_1(1 \otimes {}_{-}\alpha^n_{s+1})(\mu_1)= \pi_2(\gamma)$. A kernel relation over ${}_{-}\mathscr A_\Theta \otimes E^{n-1}_{s-1}$ having the indices $e_{r_1} \wedge e_{r_2} \wedge \cdots \wedge e_{r_{s-1}}$ such that $r_i \neq n$ for any $i$ looks like
 
 $$\psi^{p_1} V_{p_1} + \psi^{p_2} V_{p_2} +\cdot + \psi^{p_{n-s}} V_{p_{n-s}}+  \psi^{p_n} V_{e_n}=0.$$
 
  Where $V_{e_i} := (1- \nu_i \otimes \nu_i^{-1})$, $p_i \neq e_j$ for any $i, j$ and $\psi^k \in {}_{-}\mathscr A_\Theta$. It can be observed that $\psi^{p_n} = - c^{p_1} V_{p_1} -c^{p_2} V_{p_2} - \cdots  - c^{p_{s-1}} V_{p_{n-s}}$, where $c^{p_k}$ are the coefficients of $e_{r_1} \wedge e_{r_2} \wedge \cdots \wedge e_{r_{s-1}} \wedge e_{p_k} \wedge e_n \in E^{n}_{s} \wedge e_n$. Since for $a \in {}_{-}\mathscr A_\Theta$, $ a V_ i V_n = a V_n V_i$ for all $i$, the above kernel relation is hence reduced to one of the following form
  
  $$ \psi^{' p_1} V_{p_1} + \psi^{' p_2} V_{p_2} +\cdot + \psi^{' p_{n-s}} V_{p_{n-s}}=0.$$
  Which has a solution by induction hypothesis, $H_{s}({}_{-}\mathscr A_\Theta)=0$ for the $n-1$ dimensional torus. Hence we are done. 
 
 \end{proof}
\subsection{Case II: $\bullet =1$}

We prove for this case by induction over the dimension of torus and using the techniques of  \cite{Q}. The $ker(1 \otimes d_{s}) \subset {}_{-}\mathscr A_\Theta \otimes E_s$ are represented as diagrams in the $\binom{n}{s}$-dimensional lattice space. For $\gamma \in  ker(1 \otimes d_{s})$, consider a its diagram, $Diag(\gamma)$. Without loss of generality we may assume that $Diag(\gamma)$ is a connected sub-lattice of $\mathbb Z ^{\binom{n}{s}}$ assembled  by $\binom{n}{s}$ dimensional polytopes. The case $2s=n=2$ corresponds to the quantum 2-torus. Here $s=1$, hence consider an arbitrary 1-cocycle $\gamma$ and its $Diag(\gamma) \subset \mathbb Z^n$. We shall prove that $\gamma$ is homologous to 0 in a similar way as we did in \cite{Q}. We firstly change the basis of the Koszul resolution, while the basis of Connes' Koszul resolution is $1\otimes u_1$ and $1\otimes u_2$ for the 2 dimensional case and $(1 \otimes u_{i_1}\otimes u_{i_2} \otimes \ldots  \otimes u_{i_s})$ in general, the basis for the Nest's Koszul resolution is anti symmetrised
$$ 
\left( 
(u_{i_1}u_{i_2}\ldots  u_{i_s})^{-1}\otimes u_{i_1}\otimes u_{i_2}\otimes \ldots  u_{i_s}
\right)
.$$
While Nest's resolution is computationally convenient, Connes' basis is more convenient for diagrammatic approach \cite{Q}. In this subsection, we shall consider the Generalized Connes' Koszul resolution. An element of ${}_{-}\mathscr A_\Theta$ is finitely supported in $\mathbb Z^n$, hence there exists $l >0$ such that $Diag(\gamma) \subset B_l(\bar{0})$. The hyperplanes $ x_1 = l$ and $x_1 = -l$ contain $Diag(\gamma)$ between them.\\

 A connected component of $Diag(\gamma)$ is an assembly of $n$-hypercubes with no edges, i.e.  diagram of the following form does not exist.
 \begin{center}
\begin{tikzpicture}
\draw (-0.2,0) -- (1.2,0) node(xline)[right] {};
\fill (canvas cs:x=1.2cm,y=0cm) circle (3pt);

\end{tikzpicture}
\end{center} 

A typical kernel diagram in $\mathbb Z^3$ looks like as below. It is cubes connected by the kernel relation, the bullet dots represents a non-zero element of $\mathscr A_\Theta^{\oplus n}$.
\begin{tikzpicture}

\def \dx{1};
\def \dy{2};
\def \dz{2};
\def \nbx{3};
\def \nby{3};
\def \nbz{3};

\foreach \x in {1,...,\nbx} {
    \foreach \y in {1,...,\nby} {
        \foreach \z in {1,...,\nbz} {
            \node at (\x*\dx,\y*\dy,\z*\dz) [circle, fill=black] {};
        }
    }
}

\foreach \x in {1,...,\nbx} {
    \foreach \z in {1,...,\nbz}{
        \draw (\x*\dx,\dy,\z*\dz) -- ( \x*\dx,\nby*\dy,\z*\dz);
    }
}

\foreach \y in {1,...,\nbx} {
    \foreach \z in {1,...,\nbz}{
        \draw (\dx,\y*\dy,\z*\dz) -- ( \nbx*\dx,\y*\dy,\z*\dz);
    }
}

\foreach \x in {1,...,\nbx} {
    \foreach \y in {1,...,\nbz}{
        \draw (\x*\dx,\y*\dy,\dz) -- ( \x*\dx,\y*\dy,\nbz*\dz);
    }
}

\end{tikzpicture}

\begin{lemma}
$H_1({}_{-}\mathscr A_\Theta) = 0$.
\end{lemma}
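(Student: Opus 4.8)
The plan is to prove the vanishing by the diagrammatic method of \cite{Q}, running an induction on the dimension $n$ of the torus with base case $n=2$ (the quantum $2$-torus) already settled there. Working in the generalized Connes' Koszul resolution, I would write a $1$-cycle $\gamma \in \ker(1\otimes d_1)\subset {}_{-}\mathscr A_\Theta \otimes E_1$ as $\gamma=\sum_i a^i\otimes e_i$ and record it as a finitely supported diagram $Diag(\gamma)\subset \mathbb Z^n$: at each lattice point $\pi$ attach the coefficient vector $(a^i_\pi)_{1\le i\le n}$, and let an edge in direction $i$ join $\pi$ to $\pi+2e_i$, reflecting that the twisted operator $V_{e_i}$ carries a monomial $\nu^\pi$ to $\nu^\pi$ minus a unimodular multiple $\mu_i(\pi)\,\nu^{\pi+2e_i}$ of a neighbouring monomial. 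Since $\gamma$ is finitely supported there is an $l>0$ with $Diag(\gamma)\subset B_l(\bar 0)$, so the hyperplanes $x_1=l$ and $x_1=-l$ enclose the diagram.

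First I would extract the structural constraints from $d_1\gamma=0$. Reading the cycle condition coefficient by coefficient yields a linear relation at every vertex between the value there and the scaled values at its $e_i$-neighbours. Because each $\mu_i(\pi)$ is nonzero and — crucially — no $\lambda_{ij}$ is a root of unity, no geometric-progression cancellation among these scalars can occur, so a free end could never be annihilated; hence $Diag(\gamma)$ carries no dangling edge (the single-edge configuration excluded above). The operator identity $aV_iV_j=aV_jV_i$ then forces the surviving edges of each connected component to glue into closed cells, i.e. the hypercube complexes pictured above.

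Next I would make $\gamma$ a boundary by filling, peeling layers from the extremal hyperplane. On the outermost layer $x_1=l$ there are no vertices further in the $+e_1$ direction, so the closedness relations degenerate; I would solve the resulting local linear system to produce a $2$-chain $\beta\in {}_{-}\mathscr A_\Theta \otimes E_2$ whose boundary $d_2\beta$ matches $\gamma$ on the outermost cells. Replacing $\gamma$ by $\gamma-d_2\beta$ strictly shrinks the support toward $x_1=-l$; finitely many such steps push the diagram into a single hyperplane transverse to $e_n$, where the splitting ${}_{-}\mathscr A_\Theta \otimes E^n_\bullet = {}_{-}\mathscr A_\Theta \otimes E^{n-1}_\bullet \oplus {}_{-}\mathscr A_\Theta \otimes E^{n-1}_{\bullet-1}\wedge e_n$ and the induction hypothesis $H_1\big({}_{-}\mathscr A_\Theta\text{ on the }(n-1)\text{-torus}\big)=0$ finish the argument.

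The hard part will be the solvability of the local system at the extremal layer: one must show that the matrix relating the edge-coefficients $a^i_\pi$ on the boundary hyperplane to the unknown coefficients of the filling $2$-chain is invertible, equivalently that the relevant first-order recurrences $x_k-\mu_k x_{k-1}=y_k$ admit finitely supported solutions. This is precisely where the hypothesis that the $\lambda_{ij}$ are unimodular but not roots of unity is indispensable: it guarantees that the pertinent $\mu_i(\pi)$-combinations never vanish, so the bounding $2$-chain exists and the peeling terminates. Were some $\lambda_{ij}$ a root of unity the corresponding recurrence could become periodic and obstruct the filling, producing genuine homology, which is exactly the torsion behaviour one expects in the rational case.
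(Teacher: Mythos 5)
Your proposal follows essentially the same route as the paper's own proof: represent the twisted $1$-cycle as a finitely supported diagram in $\mathbb Z^n$, peel away the extremal hyperplane $\{x_1=l\}$ by subtracting boundaries of suitably chosen $2$-chains (the paper's $a_{l_i}\otimes e_1\wedge e_i$ terms), iterate until the cycle lies in a single hyperplane, and then invoke the inductive hypothesis that $H_1$ vanishes for the $(n-1)$-dimensional twisted torus. Your added discussion of why the filling system is solvable (non-vanishing of the unimodular, non-root-of-unity coefficients) makes explicit a point the paper leaves implicit, but the strategy is the same.
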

\begin{proof}
We prove by induction, let us assume that for all tori of dimension less than $n$ the first Hochschild homology vanishes. Consider $Diag(\gamma) \cap \{x_1= l\}$, we choose the $a_{l_i} \otimes e_1 \wedge e_i$ such that the projection of $(1 \otimes \alpha_2^n)(a_{l_i} \otimes e_1 \wedge e_i)$ on the hyperplane $\{x_1 = l\}$ kills $Diag(\gamma) \cap \{x= l\}$. This can be done by ordering the non-zero lattice points in the $i$th dimension and using $e_1 \wedge e_i$ with appropriate coefficients to kill them. Thus, what remains is a cycle which is not supported on $\{x_1=l\}$ and is homologous to $\gamma$.  Repeating this for each hyper plane $\{x_i = d\}, d<l$ we end up with a diagram which represents a cycle $\gamma_{d_0}$ homologous to $\gamma$ and lies entirely in $\{x_1 =d_0\}$ for some $d_0 \geq -l$. But we observe that $\gamma_{d_0}$ is also a 1-cycle for $n-1$ torus and hence homologous to 0 by induction hypothesis.  

\end{proof}

\begin{proof}[Proof of Theorem 1.1]
It follows from Lemma 4.1, Lemma 5.1 and Lemma 5.2.

\end{proof}

\section{Cyclic Homology}

Connes introduced an $S,B,I$ long exact sequence relating the Hochschild and cyclic homology of an algebra $A$,
\begin{center}
$... \xrightarrow{B} HH_n(A) \xrightarrow{I} HC_n(A) \xrightarrow{S} HC_{n-2}(A) \xrightarrow{B} HH_{n-1}(A) \xrightarrow{I}...$.\\
\end{center}
The cyclic homology 
\begin{proof}[Proof of Corollary 1.2 and 1.3]

The $\mathbb Z_2$ action on $\mathscr A_\Theta$ commutes with the map ${}_{-1}\alpha$ , we obtain the following exact sequence
\begin{center}
$... \xrightarrow{B} (HH_n({}_{-}\mathscr A_\Theta))^{\mathbb Z_2} \xrightarrow{I} (HC_n({}_{-}\mathscr A_\Theta))^{\mathbb Z_2} \xrightarrow{S} (HC_{n-2}({}_{-}\mathscr A_\Theta))^{\mathbb Z_2} \xrightarrow{B} (HH_{n-1}({}_{-}\mathscr A_\Theta))^{\mathbb Z_2} \xrightarrow{I}...$.\\
\end{center}
Using the above long exact sequence we deduce the cyclic homology of $\mathscr A_\Theta \rtimes \mathbb Z_2$[Corollary 1.2]. The $\mathbb Z_2$ invariant periodic cyclic homology for ${}_{-}\mathscr A_\Theta$ is $HP_{even}({}_{-}\mathscr A_\Theta)^{\mathbb Z_2} = \mathbb C^{2^n}$ and $HP_{odd}({}_{-}\mathscr A_\Theta)^{\mathbb Z_2} = 0$. Similarly for the untwisted case $HP_{even}(\mathscr A_\Theta)^{\mathbb Z_2}$ has dimension ${\displaystyle\sum_{ \bullet = 2k} \binom{n}{\bullet}}$, hence;

$$HP_{even}(\mathscr A_\Theta)^{\mathbb Z_2} =  \mathbb C^{2^{n-1}} \text { and } HP_{odd}(\mathscr A_\Theta)^{\mathbb Z_2} = 0.$$\\
By using the paracyclic spectral decomposition we have:
$$HP_{even}(\mathscr A_\Theta \rtimes \mathbb Z_2 ) \cong HP_{even}(\mathscr A_\Theta)^{\mathbb Z_2} \oplus HP_{even}({}_{-}\mathscr A_\Theta)^{\mathbb Z_2} = \mathbb C^{2^n} \oplus \mathbb C^{2^{n-1}} = \mathbb C^{3 \cdot 2^{n-1}}$$
and $$HP_{odd}(\mathscr A_\Theta \rtimes \mathbb Z_2 ) = 0.$$
\end{proof}

\vspace{2mm}

{\small \noindent{Safdar Quddus},\\
 Department of Mathematics, India Institute of Science, Bengaluru\\
Email: safdarquddus@iisc.ac.in .

\end{document}